\font\smallit=cmti10
\newtheorem{theorem}{Theorem}[section]
\newtheorem{lemma}[theorem]{Lemma}
\newenvironment{prf}[1]{\trivlist
\item[\hskip
\labelsep{\it #1.\hspace*{.3em}}]}{%~\hspace{\fill}~$\square$%
\endtrivlist}
\newtheorem{predefinition}[theorem]{Definition}
\newtheorem{preremark}[theorem]{Remark}
\newtheorem{prenotation}[theorem]{Notation}
\newtheorem{preexample}[theorem]{Example}
\newenvironment{example}{\begin{preexample}\rm}{\end{preexample}}
\newtheorem{preclaim}[theorem]{Claim}
\newtheorem{prequestion}[theorem]{Question}
\def\emppsubsection{\@startsection{subsection}{2}{\z@}{-3.25ex plus -1ex minus -.2ex}{-1em}}
\newcommand \CA {{\mathcal A}}
\newcommand \ZZ {{\mathbb Z}}
\newcommand \PP {{\mathbb P}^1}
\newcommand \xx {{\bold x}}
\newcommand \g {{\bold g}}
\newcommand \yy {{\bold y}}
\newcommand \fg {{\mathfrak g}}
\newcommand \aaa {{\mathfrak a}}
\begin{document}

\begin{center}
\uppercase{\bf Compositions of Integers With Bounded Parts}
\vskip 20pt
{\bf Darren B Glass}\\
{\smallit Department of Mathematics, Gettysburg College, Gettysburg PA 17325}\\
{\tt dglass@gettysburg.edu}\\
\end{center}
\vskip 30pt

\begin{abstract}
In this note, we consider ordered partitions of integers such that each entry is no more than a fixed portion of the sum.  We give a method for constructing all such compositions as well as both an explicit formula and a generating function describing the number of $k$-tuples whose entries are bounded in this way and sum to a fixed value $g$.
\end{abstract}

\section{Introduction}

Assume you have $100$ pieces of candy that you want to split among your $3$ children, but you have a restriction: you don't want the oldest child to get more than one-third of the candy, you don't want the middle child to get more than two-fifths of the candy and you don't want the youngest child to get more than two-sevenths of the candy.  How many ways can you split the candy between the three children?  In this note, we address this question and the more general question of counting compositions (also known as ordered partitions) of integers so that no part is more than a fixed portion of the total.

More precisely, let us fix an integer $k$ and for each $1 \le i \le k$ let $\alpha_i$ be a rational number so that the sum of any $k-1$ of the $\alpha_i$ is at most $1$ but that all $k$ of the $\alpha_i$ add up to more than one.  (Note that if the $\alpha_i$ sum to exactly one or less than one then the question is trivial).  We wish to count the number of ordered $k$-tuples of integers $[g_1,g_2,\ldots,g_k]$ with $0 \le g_i \le \alpha_i \sum_{j=1}^k g_j$ for each $i$.  We will call such a composition $\alpha$-communal and our goal is to understand the structure of the set of $\alpha$-communal compositions.

As is often the case with counting questions, one might reasonably ask for either an explicit formula for $f(g)$, the number of $\alpha$-communal compositions of a given integer $g$, or for a nice closed form of the generating function defined by $F(x)=\sum f(g)x^g$.  In this note we answer both questions: in particular, Section \ref{S:explicit} describes an explicit, if ugly, formula for $f(g)$.  In section \ref{S:GenFn} we show that the set of $\alpha$-communal compositions forms a monoid and we describe an explicit structure for this monoid. These results lead to Theorem \ref{T:gen}, in which we give a closed form description of $F(x)$.

We close our note with a section giving examples of the results in the previous sections.  One special case we consider is the case where all of the $\alpha_i$ are equal to $\frac{1}{k-1}$.  This situation was studied by the author in \cite{Gint}, where we referred to the $k$-tuples simply as communal compositions.  Example \ref{E:Andrews} shows that some of the results of that paper are special cases of this results in this note.

We note that the problem we are considering in this note is related, but not identical, to the ``17 Horses Puzzle", in which three sons are supposed to split 17 horses so that one son gets $\frac{1}{2}$, one son gets $\frac{1}{3}$ and the final son gets $\frac{1}{9}$ of the horses.  A fuller discussion of this problem attributing it to Tartaglia in the sixteenth century can be found in \cite[Prob 2.11]{P}.

\section{Combinatorial Formula}\label{S:explicit}

In this section, we give an explicit formula for $f(g)$, the number of $\alpha$-communal $k$-tuples summing to $g$, for any fixed integer $g$.  In particular, let $\{\alpha_i\}_{i=1}^k$ be a set of rational numbers so that the sum of any $k-1$ of these numbers is less than one but the sum of all $k$ is at least one.  We wish to count the number of ordered $k$-tuples of integers $[g_1,\ldots,g_k]$ so that $\sum g_i = g$ and $0 \le g_i \le \alpha_i g $ for all $i$.

Given an $\alpha$-communal $k$-tuple $[g_1,\ldots,g_k]$ with $\sum g_i = g$, we set $\epsilon_i = \lfloor \alpha_i g \rfloor - g_i$.  The condition that our $k$-tuple is $\alpha$-communal implies that each $\epsilon_i \in \ZZ_{\ge 0}$. Moreover, one can see that $\sum_i \epsilon_i = \sum_i \lfloor \alpha_i g \rfloor - g$, which we will denote by $s_g$.

Conversely, given a set of $k$ nonnegative integers $\{\epsilon_i\}$ so that $\sum \epsilon_i = s_g$, we set $g_i = \lfloor \alpha_i g \rfloor - \epsilon_i$.  It is clear that $g_i \le \lfloor \alpha_i g \rfloor$.  On the other hand, our hypotheses on the $\alpha_i$ include the fact that for any fixed $j$ we have that $\sum_{i \ne j} \alpha_i \le 1$.  Thus, we compute:

\begin{eqnarray*}
\sum_{i \ne j} \alpha_i & \le & 1 \\
\sum_{i \ne j} \alpha_i g & \le & g \\
\sum_{i \ne j} \lfloor\alpha_i g \rfloor & \le & g \\
\sum_{i} \lfloor \alpha_i g \rfloor - g& \le & \lfloor \alpha_j g \rfloor \\
s_g &\le& \lfloor \alpha_j g \rfloor \\
\end{eqnarray*}

\noindent which implies that $\epsilon_j \le \lfloor \alpha_i g \rfloor$ and therefore that each $g_j \ge 0$. Moreover, one can check that $\sum g_i = g$, implying that $[g_1,\ldots,g_k]$ is an $\alpha$-communal $k$-tuple summing to $g$.

In particular, there is a natural bijection between the set of $\alpha$-communal $k$-tuples summing to $g$ and the number of $k$-tuples of nonnegative integers summing to $s_g$. In order to count these, we will use the following lemma, which is standard in combinatorial number theory.  See \cite[Prop 21.5]{BB} for one proof.

\begin{lemma}
The number of solutions to the equation $x_1+\ldots+x_k = m$ where all of the $x_i$ are nonnegative integers is equal to $\binom{m+k-1}{k-1}$.
\end{lemma}

The following theorem is an immediate consequence.

\begin{theorem}\label{T:exp}
The number of $\alpha$-communal $k$-tuples whose entries sum to $g$ is given by
\[f(g) = \left(\begin{array}{c}
                    \sum_{i=1}^k \lfloor \alpha_i g \rfloor - g + k - 1 \\
                    k-1
                    \end{array}
                    \right) \]
\end{theorem}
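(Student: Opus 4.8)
The plan is to observe that Theorem \ref{T:exp} is essentially already proved by the discussion preceding its statement, and the task reduces to assembling the pieces cleanly. The key structural input is the bijection constructed in the paragraphs above: to an $\alpha$-communal $k$-tuple $[g_1,\ldots,g_k]$ with $\sum g_i = g$ one associates the tuple $(\epsilon_1,\ldots,\epsilon_k)$ defined by $\epsilon_i = \lfloor \alpha_i g\rfloor - g_i$, and conversely. So the first step is to record that this assignment is well-defined in both directions — that is, that $\epsilon_i \ge 0$ follows from $g_i \le \alpha_i g$ together with $g_i \in \ZZ$ (hence $g_i \le \lfloor \alpha_i g\rfloor$), and that the inverse map lands in the $\alpha$-communal tuples, which is exactly the chain of inequalities displayed in the excerpt culminating in $s_g \le \lfloor \alpha_j g\rfloor$ for every $j$.

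Second, I would verify that the two maps are mutually inverse: this is immediate, since $\epsilon_i = \lfloor \alpha_i g\rfloor - g_i$ and $g_i = \lfloor \alpha_i g\rfloor - \epsilon_i$ are literally the same relation read in opposite directions, and the sum condition is preserved because $\sum_i \epsilon_i = \sum_i \lfloor \alpha_i g\rfloor - \sum_i g_i = \sum_i \lfloor \alpha_i g\rfloor - g = s_g$. Third, having the bijection between $\alpha$-communal $k$-tuples summing to $g$ and nonnegative integer solutions of $\epsilon_1 + \cdots + \epsilon_k = s_g$, I would simply apply the Lemma with $m = s_g$ and conclude that the count is $\binom{s_g + k - 1}{k-1}$; substituting the definition $s_g = \sum_{i=1}^k \lfloor \alpha_i g\rfloor - g$ gives exactly the claimed formula for $f(g)$.

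There is no serious obstacle here — the theorem is labeled "an immediate consequence" for good reason — but the one point that deserves a sentence of care is the implicit use of the hypothesis on the $\alpha_i$. The inequality $\sum_{i\ne j}\alpha_i \le 1$ is what makes $s_g$ small enough that every $\epsilon_j$ can be subtracted from $\lfloor \alpha_j g\rfloor$ without going negative; without it the inverse map would not be well-defined, and the bijection — hence the count — would fail. I would also note in passing that the floor in the step $\sum_{i\ne j}\alpha_i g \le g \implies \sum_{i\ne j}\lfloor \alpha_i g\rfloor \le g$ uses that $g$ is an integer and that $\lfloor x\rfloor \le x$, so the left side only decreases. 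With these remarks in place the proof is a two-line invocation of the Lemma, and I would write it as such.
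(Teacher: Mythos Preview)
Your proposal is correct and follows exactly the paper's own argument: establish the bijection $\epsilon_i = \lfloor \alpha_i g\rfloor - g_i$ between $\alpha$-communal $k$-tuples summing to $g$ and nonnegative integer $k$-tuples summing to $s_g$, then apply the stars-and-bars lemma with $m = s_g$. The paper labels the theorem an ``immediate consequence'' of precisely this discussion, and your write-up simply makes the two directions of the bijection and the use of the hypothesis $\sum_{i\ne j}\alpha_i \le 1$ explicit.
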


If the rational number $\alpha_i$ can be expressed in lowest terms as $\frac{m_i}{n_i}$ and we set $n$ to be the least common multiple of the $n_i$ then we note that the formula in Theorem \ref{T:exp} can be instead expressed as a collection of $n$ polynomials of degree $k-1$ depending on the value of $g$ mod $n$. We return to this formula in explicit examples in section \ref{S:ex}.

\section{Structure of $\alpha$-communal Compositions}\label{S:GenFn}

To begin, let us fix a $k$-tuple $(\alpha_1,\ldots,\alpha_k)$.   We leave the proof of the following lemma to the reader:

\begin{lemma}\label{L:sum}
If $\xx = [x_1,\ldots,x_k]$ and $\yy = [y_1,\ldots,y_k]$ are $\alpha$-communal $k$-tuples then so is their sum $\xx + \yy = [x_1+y_1,\ldots,x_k+y_k]$.
\end{lemma}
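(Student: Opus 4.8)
The plan is to verify directly that the coordinatewise sum satisfies the defining inequalities of an $\alpha$-communal tuple. Write $g = \sum_i x_i$ and $h = \sum_i y_i$, so that the sum $\xx + \yy$ has total $\sum_i (x_i + y_i) = g + h$. The nonnegativity condition is immediate: each $x_i + y_i \ge 0$ since $x_i, y_i \ge 0$. So the content is the upper bound, namely that $x_i + y_i \le \alpha_i (g + h)$ for every $i$.

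For this, I would simply add the two hypotheses. Since $\xx$ is $\alpha$-communal we have $x_i \le \alpha_i g$, and since $\yy$ is $\alpha$-communal we have $y_i \le \alpha_i h$. Adding these gives $x_i + y_i \le \alpha_i g + \alpha_i h = \alpha_i (g+h)$, which is exactly the required inequality for the tuple $\xx + \yy$ with total $g+h$. Hence $\xx + \yy$ is $\alpha$-communal, completing the proof.

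There is essentially no obstacle here — the point is only that the bound $g_i \le \alpha_i \sum_j g_j$ is \emph{linear} in the tuple, so it is preserved under addition, and nonnegativity is trivially preserved. One small subtlety worth a remark: the quantities $\alpha_i g$ need not be integers, so one should phrase the inequalities using the real bound $\alpha_i g$ rather than $\lfloor \alpha_i g \rfloor$; the definition of $\alpha$-communal as given (with $0 \le g_i \le \alpha_i g$) already does this, so no floor-function bookkeeping is needed. This is why the lemma is safely left to the reader.

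Combined with the observation that the all-zeros tuple $[0,\ldots,0]$ is trivially $\alpha$-communal (it sums to $0$ and $0 \le \alpha_i \cdot 0$), Lemma~\ref{L:sum} shows that the set of all $\alpha$-communal compositions, across all values of $g$, is closed under addition and contains an identity, i.e. forms a commutative monoid. This is the structural fact that will be exploited in the remainder of Section~\ref{S:GenFn} to describe the monoid explicitly and to derive the closed form for $F(x)$ in Theorem~\ref{T:gen}.
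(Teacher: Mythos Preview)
Your proof is correct and is precisely the routine verification the paper has in mind; the paper itself gives no argument, stating only that the proof is left to the reader. Your added remarks about linearity, the irrelevance of floors, and the monoid structure are accurate and align with how the lemma is used in Section~\ref{S:GenFn}.
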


In particular, the set of $\alpha$-communal $k$-tuples forms a submonoid of the additive monoid $\ZZ_{\ge 0}^k$.  This leads to the natural question of finding a set of generators for the set.  In order to do so, let us first introduce some notation.  We write the rational number $\alpha_i$ as the fraction $\frac{m_i}{n_i}$ in lowest terms and define $N$ to be the product $\prod_{i=1}^k n_i$. Additionally, we set $A = N(\sum_{i=1}^k \alpha_i - 1)$ and $\hat{\alpha_i} = 1 - \sum_{j \ne i}\alpha_j$.  For each $i$, let us define the $k$-tuple $\xx_i$ as follows:

\[\xx_i=\frac{N}{n_i}[\alpha_1,\ldots,\alpha_{i-1},\hat{\alpha_i},\alpha_{i+1},\ldots,\alpha_k]\]

We note that the entries of each $\xx_i$ are nonnegative integers because of the assumption that the sum of any $k-1$ of the $\alpha_i$ is at most $1$.  Moreover, it is an easy exercise to check that each $\xx_i$ is $\alpha$-communal, and by Lemma \ref{L:sum} every triple obtained as a nonnegative integral linear combination of the $\xx_i$ will be as well.

\begin{lemma}
Let $\fg = [g_1,\ldots,g_k]$ be an $\alpha$-communal $k$-tuple with $g = \sum g_i$. Then one can write $\fg$ as the sum of the $\xx_i$ in the following way:

\[\fg = \sum_{j=1}^k \frac{m_jg-n_jg_j}{A}\xx_j\]

\end{lemma}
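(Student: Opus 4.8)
The plan is to check the asserted identity one coordinate at a time, regarding both sides as vectors in $\mathbb{Q}^k$ and showing that for each $\ell$ the $\ell$-th entry of $\sum_{j=1}^k \frac{m_j g - n_j g_j}{A}\,\xx_j$ equals $g_\ell$. Write $c_j = \frac{m_j g - n_j g_j}{A}$ for the proposed coefficients. Before computing I would record that each $c_j$ is a well-defined nonnegative rational: the $\alpha$-communality of $\fg$ gives $g_j \le \alpha_j g = \frac{m_j}{n_j}g$, so $m_j g - n_j g_j \ge 0$, while $A = N\big(\sum_i \alpha_i - 1\big) > 0$ by the standing hypothesis that the $\alpha_i$ sum to more than $1$.

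The computation rests on two elementary identities. First, $\hat{\alpha_\ell} - \alpha_\ell = -\tfrac{A}{N}$: this follows by rewriting $\hat{\alpha_\ell} = 1 - \sum_{j\ne\ell}\alpha_j = \alpha_\ell - \big(\sum_j\alpha_j - 1\big)$ and invoking the definition of $A$. Second, $\sum_{j=1}^k c_j\frac{N}{n_j} = g$: indeed $c_j\frac{N}{n_j} = \frac{N}{A}\cdot\frac{m_j g - n_j g_j}{n_j} = \frac{N}{A}(\alpha_j g - g_j)$, so summing over $j$ and using $\sum_j g_j = g$ gives $\frac{N}{A}\big(g\sum_j\alpha_j - g\big) = \frac{N}{A}\cdot g\cdot\frac{A}{N} = g$.

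With these in hand, fix $\ell$ and expand the $\ell$-th coordinate of $\sum_j c_j\xx_j$. By the definition of $\xx_j$ its $\ell$-th entry is $\frac{N}{n_j}\alpha_\ell$ when $j\ne\ell$ and $\frac{N}{n_\ell}\hat{\alpha_\ell}$ when $j=\ell$, so the $\ell$-th coordinate equals $\alpha_\ell\sum_{j\ne\ell}c_j\frac{N}{n_j} + c_\ell\frac{N}{n_\ell}\hat{\alpha_\ell}$, which I would rewrite as $\alpha_\ell\sum_{j}c_j\frac{N}{n_j} + c_\ell\frac{N}{n_\ell}\big(\hat{\alpha_\ell}-\alpha_\ell\big)$ by adding and subtracting the $j=\ell$ term. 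Substituting the two identities above turns this into $\alpha_\ell g - c_\ell\frac{A}{n_\ell}$, and since $c_\ell\frac{A}{n_\ell} = \frac{m_\ell g - n_\ell g_\ell}{n_\ell} = \alpha_\ell g - g_\ell$, the coordinate collapses to $g_\ell$. As $\ell$ was arbitrary, this proves $\fg = \sum_j c_j\xx_j$.

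There is no real obstacle here — the argument is a direct verification — but two points deserve care. One is the bookkeeping of isolating the diagonal term $j=\ell$, which is exactly what lets the scalar $\alpha_\ell$ be factored out of the off-diagonal sum; getting this manipulation right is the crux of the proof. The other is to confirm, using that each $\alpha_i = m_i/n_i$ is in lowest terms and $N = \prod_i n_i$, that every $\frac{N}{n_j}\alpha_\ell$ (for $j\ne\ell$) and every $\frac{N}{n_\ell}\hat{\alpha_\ell}$ is an integer — as already noted in the construction of the $\xx_i$ — so that the identity is an honest statement about compositions in $\ZZ_{\ge 0}^k$ and not merely a formal one over $\mathbb{Q}$.
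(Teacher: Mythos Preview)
Your proof is correct and follows essentially the same approach as the paper: a direct coordinate-by-coordinate verification that the $\ell$-th entry of $\sum_j c_j\xx_j$ equals $g_\ell$, with your version organized more cleanly via the two auxiliary identities $\hat{\alpha_\ell}-\alpha_\ell=-A/N$ and $\sum_j c_j\,N/n_j=g$. One small caveat on your closing paragraph: the coefficients $c_j=(m_jg-n_jg_j)/A$ need not be integers, so the lemma really is an identity over $\mathbb{Q}$ rather than a decomposition inside $\ZZ_{\ge 0}^k$; the paper addresses this integrality issue separately after the lemma.
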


\begin{proof}
Consider the $i^{th}$ coordinate of the $k$-tuple defined as $\sum_{j=1}^k (m_jg-n_jg_j)\xx_j$.  In particular, we can compute that it will be

\begin{eqnarray*}
\sum_{j \ne i}(m_jg-n_jg_j)\frac{Nm_j}{n_i n_j}+(m_ig-n_ig_i)\left(\frac{N}{n_i}-\sum_{j\ne i}\frac{Nm_j}{n_in_j}\right)&=&\alpha_iNg - \sum_{j \ne i}\alpha_iNg_j - Ng_i + \sum_{j \ne i} \alpha_jNg_i\\
&=&Ng_i(\sum_{j=1}^k \alpha_j -1)\\
&=&Ag_i
\end{eqnarray*}

The lemma immediately follows.
\end{proof}

\noindent By assumption, $m_jg \ge n_jg_j$ for each $j$, so the coefficients are all nonnegative.  In particular, if each $m_jg - n_jg_j$ is a multiple of $A$ then we have shown that one can write the $\fg$ as an integral combination of the $\xx_i$.  In particular, if $A = 1$  then the $\xx_i$ form a basis for the monoid of $\alpha$-communal $k$-tuples, a situation which we explore in Example \ref{Ex:A1}.  In the case where $A>1$ we will not get all $k$-tuples in this manner.  To cover this case, let $a_j$ be the least residue of $m_jg - n_jg_j$ mod $A$.  Then it follows that $(m_jg-n_jg_j-a_j)/A$ is a nonnegative integer and we compute:

\begin{eqnarray*}
\sum_{i=1}^k\frac{m_ig-n_ig_i-a_i}{A}\xx_i &=&[g_1,\ldots,g_k]-\sum_{i=1}^k\frac{a_i}{A}\xx_i \\
&=&[g_1,\ldots,g_k]-[b_1,\ldots,b_k]
\end{eqnarray*}

\noindent where

\[b_j = \frac{1}{A}\left(N\hat{\alpha_j}\frac{a_j}{n_j}+ N \alpha_j \sum_{i \ne j} \frac{a_i}{n_i}\right)\]

In particular, we can write any $\alpha$-communal $k$-tuple $\g = [g_1,\ldots,g_k]$ in a unique way as the sum of a `base' $k$-tuple $[b_1,\ldots,b_k]$ and a nonnegative integral combination of the $\xx_i$.  Moreover, because the $k$-tuples $\g$ and $\xx_i$ consist of integers it must be the case that the $b_i$ are all integers and therefore the base $k$-tuples that we need to consider are exactly those arising from $k$-tuples $(a_1,\ldots,a_k)$ of least residues which make them integers.  In particular, they will be the $k$-tuples in the set:
\[\CA = \left\{(a_1,\ldots,a_k) \, \begin{array}{|l}
0 \le a_i < A\\
N\hat{\alpha_j}\frac{a_j}{n_j}+ N \alpha_j \sum_{i \ne j} \frac{a_i}{n_i} \equiv 0 \text{ mod } A \text{ for all } 1 \le j \le k\\
\end{array} \right\}\]

\noindent which will have at most $A^k$ elements and for `generic' choices of the $\alpha_i$ will have $A^{k-1}$ elements -- if the $m_i$ and $n_i$ are all relatively prime to $A$ then one deduces that the congruence conditions are in fact equivalent and allow one to get an explicit formula for $a_k$ in terms of the other $a_i$.

For each $\aaa \in \CA$, we define $b(\aaa)$ to be the sum of the entries in the corresponding base $k$-tuple, and we can compute:

\begin{eqnarray*}
b(\aaa) &=& \sum_{j=1}^k b_j \\
&=& \sum_{j=1}^k \frac{N}{A}\left(\hat{\alpha_j}\frac{a_j}{n_j}+ \alpha_j \sum_{i \ne j} \frac{a_i}{n_i}\right) \\
&=&\frac{N}{A}\sum_{j=1}^k \hat{\alpha_j}\frac{a_j}{n_j} + \frac{N}{A}\sum_{i \ne j} \frac{\alpha_ja_i}{n_i} \\
&=&\frac{N}{A}\left(\sum_{i=1}^k \frac{a_i}{n_i}\right)
\end{eqnarray*}

It follows that the number of $\alpha$-communal $k$-tuples summing to $g$ is the same as the number of ways to write $g$ as the sum of a number of the form $b(\aaa)$ for some $k$-tuple $(a_1,\ldots,a_k) \in \CA$ and a nonnegative integral linear combination of the numbers $\frac{N}{n_i}$.  Theorem \ref{T:gen} is an immediate consequence using basic facts on generating functions (see \cite{AE} or \cite{W}, for example).

\begin{theorem}\label{T:gen}
Let $f(g)$ be the number of $k$-tuples of nonnegative integers $g_1,\ldots,g_k$ so that $\sum g_i=g$ and $g_i \le \alpha_ig$ where the $\alpha_i$ are rational numbers as described as above.  Then the function $f(g)$ can be described by a generating function in the following way:
\[F(x) = \sum_{g=0}^{\infty}f(g)x^g = \frac{\sum_{\aaa \in \CA} x^{b(\aaa)}}{\prod_{i=1}^k(1-x^{N/n_i})}\]

\end{theorem}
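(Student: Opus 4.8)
The plan is to assemble Theorem \ref{T:gen} directly from the monoid decomposition established in the preceding paragraphs, translating the structural statement into a statement about generating functions. The essential input, already proved, is that every $\alpha$-communal $k$-tuple $\g$ decomposes \emph{uniquely} as $\g = [b_1,\ldots,b_k] + \sum_{i=1}^k c_i \xx_i$ with the base $k$-tuple coming from some $\aaa \in \CA$ and the $c_i$ ranging over $\ZZ_{\ge 0}$, together with the computation that the coordinate sum of $\xx_i$ equals $N/n_i$ and the coordinate sum of the base tuple equals $b(\aaa) = \frac{N}{A}\sum_i \frac{a_i}{n_i}$. The first step is therefore to record the consequence of uniqueness: summing coordinates, an $\alpha$-communal $k$-tuple summing to $g$ corresponds bijectively to a choice of $\aaa \in \CA$ together with $(c_1,\ldots,c_k) \in \ZZ_{\ge 0}^k$ satisfying $b(\aaa) + \sum_{i=1}^k c_i \tfrac{N}{n_i} = g$.

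Next I would invoke the standard dictionary between such weighted counting problems and products of geometric series. For a fixed $\aaa$, the number of tuples $(c_1,\ldots,c_k)$ with $\sum c_i \tfrac{N}{n_i} = g - b(\aaa)$ has generating function $x^{b(\aaa)} \prod_{i=1}^k \sum_{c_i \ge 0} x^{c_i N/n_i} = x^{b(\aaa)} \prod_{i=1}^k \frac{1}{1 - x^{N/n_i}}$, valid as a formal power series since each exponent $N/n_i$ is a positive integer. Because the decomposition is a disjoint union over $\aaa \in \CA$ (here is where uniqueness is used a second time, to guarantee no overcounting between different base tuples), summing these contributions gives
\[
F(x) = \sum_{g \ge 0} f(g) x^g = \sum_{\aaa \in \CA} \frac{x^{b(\aaa)}}{\prod_{i=1}^k (1 - x^{N/n_i})} = \frac{\sum_{\aaa \in \CA} x^{b(\aaa)}}{\prod_{i=1}^k (1 - x^{N/n_i})},
\]
which is the claimed formula.

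The only genuinely delicate point, and the one I would make sure to address explicitly, is that the correspondence is a genuine bijection rather than merely a surjection — that is, that two distinct pairs $(\aaa, \mathbf{c})$ never produce the same $\alpha$-communal tuple. This is what the word ``unique'' in the paragraph before the theorem is doing, and it rests on the fact that the $\aaa$-component is recovered as the vector of least residues of $m_i g - n_i g_i$ modulo $A$ while the $c_i$ are then determined as $(m_i g - n_i g_i - a_i)/A$; since these are forced by $\g$, the decomposition map is injective. Everything else is bookkeeping with the already-verified identities $b(\aaa) = \frac{N}{A}\sum_i a_i/n_i$ and $\mathrm{sum}(\xx_i) = N/n_i$, plus the elementary generating-function identity for counting nonnegative integer solutions of a single linear equation (equivalently, the product expansion referenced via \cite{AE} or \cite{W}). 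I would therefore present the proof in three short moves: (i) restate the unique decomposition and take coordinate sums; (ii) write down the generating function of each fiber over $\aaa$; (iii) sum over the disjoint pieces indexed by $\CA$ and simplify.
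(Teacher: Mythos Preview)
Your proposal is correct and follows exactly the same approach as the paper: the paper states that Theorem~\ref{T:gen} is ``an immediate consequence using basic facts on generating functions'' of the unique decomposition $\g = [b_1,\ldots,b_k] + \sum c_i \xx_i$ established just before, and you have simply written out that immediate consequence in full. Your explicit attention to why the decomposition is a bijection (via recovery of the $a_i$ as least residues) is more detailed than what the paper provides, but the underlying argument is identical.
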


\section{Examples}\label{S:ex}

Computing explicit formulas from Theorem \ref{T:gen} can be difficult in general, but in many specific cases, such as when the $m_i$ and $n_i$ share common factors, the terms reduce greatly and it is not difficult to compute $F(x)$.  We close this note with some examples and applications.

\begin{example}\label{Ex:A1}
Let $k=3$ with $\alpha_1=\frac{1}{2}, \alpha_2 = \frac{1}{3}, \alpha_3=\frac{1}{5}$.  In particular, $\alpha_1+\alpha_2+\alpha_3=\frac{31}{30}$ so we see that $A=1$.  As discussed in the previous section, this implies that every $\alpha$-communal triple can be written as a nonnegative integral combination of the triples $\xx_1=[7,5,3], \xx_2=[5,3,2], \xx_3=[3,2,1]$.  In particular, the generating function whose coefficients give us the number of $\alpha$-communal triples summing to $g$ is given by $F(x) = ((1-x^{15})(1-x^{10})(1-x^6))^{-1}$.  At the same time, Theorem \ref{T:exp} tells us that a formula for the number of $\alpha$-communal triples summing to $g$ is given by
\[f(n) = \left(\begin{array}{c}
                    \lfloor \frac{g}{2} \rfloor + \lfloor \frac{g}{3} \rfloor + \lfloor\frac{g}{5} \rfloor - g + 2 \\
                    2
                    \end{array}
                    \right) \]

Other examples of $k$-tuples so that the term $A=1$, making it particularly easy to write down the structure of $\alpha$-communal compositions, include $\alpha = (\frac{2}{5},\frac{1}{17},\frac{7}{23},\frac{9}{38}), (\frac{1}{4},\frac{1}{7},\frac{2}{11},\frac{4}{13},\frac{2}{17}),$ and $(\frac{3}{11},\frac{3}{13},\frac{2}{15},\frac{1}{17},\frac{7}{23})$
\end{example}

\begin{example} \label{E:Andrews}
We next wish to apply our results to the classical problem of counting triangles with a fixed perimeter and integer sides, as considered by Andrews in \cite{A}.  In our context, this is the case where $k=3$ and each of the $\alpha_i = \frac{1}{2}$, so each $m_i=1$ and each $n_i=2$.  We wish to consider the more general situation where we set $\alpha_i = \frac{1}{k-1}$ for each $1 \le i \le k$.

Let $\ell$ be the least residue of $g$ mod $k-1$.  In particular we have $\lfloor \frac{g}{k-1} \rfloor = \frac{g-\ell}{k-1} \in \ZZ$.  It follows from Theorem \ref{T:exp} that the number of $\alpha$-communal $k$-tuples summing to $g$ is given by the function

\[f(g) = \left(\begin{array}{c}
                    \frac{g-\ell k}{k-1} + k - 1 \\
                    k-1
                    \end{array}
                    \right) \]

As an illustration, in the case $k=3$ this reduces to the formula $f(g) = \frac{1}{8}(g^2+6g+8)$ if $g$ is even and $\frac{1}{8}(g^2-1)$ if $g$ is odd.

If we instead wish to find the generating function describing the sequence $\{f(g)\}$, we note that in the notation of Section \ref{S:GenFn} we can compute that $A=(k-1)^{k-1}$ and the set $\CA$ consists of all $k$-tuples $(a_1,\ldots a_k)$ so that $0 \le a_i <(k-1)^{k-1}$ and all of the $a_i$ are congruent mod $k-1$.  In particular, we can write the set $\CA$ as a disjoint union of sets $\CA_s$ for $0 \le s \le (k-2)$ where all of the $a_i$ are congruent to $s$ mod $k-1$.  We note that if $\aaa \in \CA_s$ then we have that $b(\aaa) = \sum a_i \equiv s$ mod $k-1$ as well.

We observe that for any $g \equiv 0$ mod $k-1$, the number of ways to write it as a sum of $k$ numbers which are multiples of $k-1$ is a straightforward thing to compute, and standard results about generating functions imply that ${\displaystyle \sum_{\aaa \in \CA_0} x^{\sum a_i} = (1+x^{k-1}+x^{2(k-1)}+\ldots+x^{(k-2)(k-1)})^k}$. Similarly, there is a bijection between $k$-tuples in $\CA_s$ whose entries sum to $g$ and $k$-tuples in $\CA_0$ whose entries sum to $g-ks$, allowing one to compute that ${\displaystyle \sum_{\aaa \in \CA_s} x^{b(\aaa)} = x^{ks}(1+x^{k-1}+\ldots+x^{(k-2)(k-1)})^k}$.  We now use Theorem \ref{T:gen} to compute the generating function explicitly:

\begin{eqnarray*}
F(x) &=&  \frac{\sum_{\aaa \in \CA} x^{b(\aaa)}}{\prod_{i=1}^k(1-x^{N/n_i})}\\
&=& \frac{(1+x^k+\ldots+x^{k(k-2)})(1+x^{k-1}+x^{2(k-1)}+\ldots+x^{(k-2)(k-1)})^k}{(1-x^{(k-1)^2})^k}\\
&=&\frac{(1-x^{k(k-1)})(1-x^{(k-1)^2})^k}{(1-x^k)(1-x^{k-1})^k(1-x^{(k-1)^2})^k}\\
&=&\frac{1-x^{k(k-1)}}{(1-x^k)(1-x^{k-1})^k}
\end{eqnarray*}

\noindent which agrees with the formula given in \cite[Thm 10]{Gint}.
\end{example}

\begin{example}
Let $k=3$ and $m_i = 1$, and $n_1=n_2=2$ with $n_3 =n \ge 2$.  In this case, $A=4$ and one can see that $(a_1,a_2,a_3) \in \CA$ if and only if $a_1 \equiv a_2$ (mod $2$) and $a_3 \equiv a_1 + \frac{a_1+a_2}{2}n$ (mod $2$).  In particular, if $n$ is odd then $\CA$ consists of the sixteen triples:

\[\begin{array}{cccc}
(0,0,0) & (0,0,2) & (2,2,0) & (2,2,2)\\
(0,2,1) & (0,2,3) & (2,0,1) & (2,0,3)\\
(1,1,0) & (1,1,2) & (3,3,0) & (3,3,2)\\
(1,3,1) & (1,3,3) & (3,1,1) & (3,1,3)\\
\end{array}\]

One can then use Theorem \ref{T:gen} to compute that the generating function in this case is:
\begin{eqnarray*}
F(x) &=&\frac{1+x^2+x^n+2x^{n+1}+x^{n+2}+2x^{n+3}+x^{2n}+2x^{2n+1}+x^{2n+2}+2x^{2n+3}+x^{3n}+x^{3n+2}}{(1-x^4)(1-x^{2n})^2}\\
& =& \frac{1+2x^{n+1}+x^{2n}}{(1-x^2)(1-x^n)(1-x^{2n})}
\end{eqnarray*}

Similarly, if $n$ is even one can show that the generating function simplifies to
\[F(x) = \frac{1+x^{n+1}}{(1-x^2)(1-x^n)^2}\]

One can additionally use Theorem \ref{T:exp} to show that the number of $\alpha$-communal triples summing to $g$ is given by $f(g) = \frac{1}{2}(\lfloor\frac{g}{n}\rfloor +1)(\lfloor\frac{g}{n}\rfloor + \epsilon_g)$
\noindent where $\epsilon_g = 2$ if $g$ is even and $\epsilon_g = 0$ if $g$ is odd.

It is worth noting that this question was one of the original motivations for this note, as it is related to the question of counting the irreducible components of the moduli space of dihedral covers of the projective line.  While we will not go into details here, the interested reader might consult \cite[\S 5]{Gint} and \cite{GP} for the similar problem comparing $(\frac{1}{2},\frac{1}{2},\frac{1}{2})$-communal triples to the moduli space of $(\ZZ/2\ZZ)^2$-covers of $\PP$.
\end{example}

\begin{example}
Returning to the example from the opening paragraph of this note, let us let $\alpha_1 = \frac{1}{3}, \alpha_2=\frac{2}{5}$ and $\alpha_3=\frac{2}{7}$.  Then $A = 2$ and $\CA$ consists of triples $(a_1,a_2,a_3)$ with either one or three entries equalling $0$ and the others equalling $1$.  In particular, we see that any division of candy that satisfies our restrictions can be written as the sum of one of the triples in the set $\{[0,0,0],[6,7,5],[8,10,7], [9,11,8]\}$ and an integral linear combination of the triples $[5,6,4], [7,8,6], [11,14,10]$. Moreover, the generating function associated to this problem is $F(x) = \frac{1+x^{18}+x^{25}+x^{28}}{(1-x^{15})(1-x^{21})(1-x^{35})}$.  A computer algebra system will now tell us that the expansion of this as a power series includes the terms
\[F(x) = \ldots x^{97} + 3 x^{98} + 3 x^{99} + 3 x^{100} + x^{101} + 3 x^{102} + 3 x^{103}+\ldots\]
which tells us that there are three ways to distribute $100$ pieces of candy according to these rules (in particular, they are $[32,40,28],[33,39,28]$, and $[33,40,27]$) but a unique way of dividing $101$ pieces, $[33,40,28]$.
\end{example}

\bibliographystyle{amsplain}
\bibliography{partitions}

\providecommand{\bysame}{\leavevmode\hbox to3em{\hrulefill}\thinspace}
\providecommand{\MR}{\relax\ifhmode\unskip\space\fi MR }
% \MRhref is called by the amsart/book/proc definition of \MR.
\providecommand{\MRhref}[2]{%
  \href{http://www.ams.org/mathscinet-getitem?mr=#1}{#2}
}
\providecommand{\href}[2]{#2}
\begin{thebibliography}{1}

\bibitem{A}
George~E. Andrews, \emph{A note on partitions and triangles with integer
  sides}, Amer. Math. Monthly \textbf{86} (1979), no.~6, 477--478. \MR{533570
  (80g:10009)}

\bibitem{AE}
George~E. Andrews and Kimmo Eriksson, \emph{Integer partitions}, Cambridge
  University Press, Cambridge, 2004. \MR{2122332 (2006b:11125)}

\bibitem{BB}
B\'ela Bajnok, \emph{An invitation to abstract mathematics}, Springer Verlag,
  New York, 2013.

\bibitem{GP}
Darren Glass and Rachel Pries, \emph{Hyperelliptic curves with prescribed
  {$p$}-torsion}, Manuscripta Math. \textbf{117} (2005), no.~3, 299--317.
  \MR{MR2154252 (2006e:14039)}

\bibitem{Gint}
Darren~B. Glass, \emph{Communal partitions of integers}, Integers \textbf{12}
  (2012), no.~3, 405--416. \MR{2955522}

\bibitem{P}
Miodrag~S. Petkovi{\'c}, \emph{Famous puzzles of great mathematicians},
  American Mathematical Society, Providence, RI, 2009. \MR{2541498
  (2010h:00008)}

\bibitem{W}
Herbert~S. Wilf, \emph{generatingfunctionology}, third ed., A K Peters Ltd.,
  Wellesley, MA, 2006. \MR{2172781 (2006i:05014)}

\end{thebibliography}

\end{document}